\newtheorem{theorem}{Theorem}
\begin{document}

\title{ Analysis of Discrete Stochastic Population Models with Normal Distribution}
\author{Haiyan Wang\\
School of Mathematical and Natural Sciences\\
Arizona State University\\
Phoenix, AZ 85069\\
haiyan.wang@asu.edu
}
\date{}
\maketitle

\begin{abstract}
This paper analyzes a stochastic logistic difference equation under the assumption that the population distribution follows a normal distribution. Our focus is on the mathematical relationship between the average growth rate and a newly introduced concept—the uniform structural growth rate, which captures how growth is influenced by the internal distributional structure of the population. We derive explicit relationships linking the uniform structural growth rate to the parameters of the normal distribution and the variance of a small stochastic perturbation. The analysis reveals the existence of two distinct branches of the uniform structural growth rate, corresponding to alternative population states characterized by higher and lower growth rates. This duality provides deeper insights into the dynamics of population growth under stochastic influences. A sufficient condition for the existence of two uniform structural growth rates is established and rigorously proved, demonstrating that there exist infeasible intervals where no uniform structural growth rate can be defined. We also explore the biological significance of these findings, emphasizing the role of stochastic perturbations and the distribution in shaping population dynamics.

\end{abstract}

\section{Introduction}
Logistic difference equations are commonly used to model population growth in environments with limited resources. In their deterministic form, these models capture an initial phase of exponential growth that gradually slows as the population nears the carrying capacity—the maximum population size that the environment can support. The behavior of these models is highly sensitive to the intrinsic growth rate, which governs the rate of expansion and the dynamics of the system \cite{Marry2002, may1976simple,kot2001elements}.  

Stochastic models provide important advantages over their deterministic counterparts. While deterministic models yield fixed outcomes based solely on initial conditions, stochastic models capture the full distribution of possible population sizes, offering deeper insights into both the expected dynamics and the variability around them. By incorporating randomness, these models more accurately reflect the behavior of real-world populations, especially under fluctuating environmental conditions. The study of stochastic equations has become a vibrant area of research, with extensive literature exploring a wide range of extensions and applications \cite{Allen,Aktar2023,kot2001elements,Schreiber2021,Braverman2013,Yan2024,erguler2008statistical,Kang2010}.

In this paper,  we study the stochastic logistic equation from time $t$ to $t+1$
\begin{equation} \label{eq:logistic3}
X_{t+1} = r X_t \left( 1 - X_t \right) \epsilon_t,
\end{equation}
where \( X_t \) is a distribution of population size at time \( t \), \( r >0\) is a real number, $\epsilon_t$ is a small nonnegative perturbation distribution representing stochastic effects.  $\epsilon_t$ is independent of $X_t$ and its mean is $1$ ($E[\epsilon_t] = 1$).  We assume that $X_t$ follows a normal distribution ($X_t \sim N(\mu, \sigma^2), \mu>0, \sigma>0)$.

Here we call \( r \) a \textit{uniform intrinsic growth rate under structural transformation}, abbreviated as the \textit{uniform structural growth rate} — a constant applied uniformly across the entire of the domain of $X_t$, following a nonlinear structural transformation given by $X_t(1-X_t)$. This differs from an average growth rate defined by the expectation $E[rX_t(1-X_t)]$, which captures overall outcomes but not underlying structure. We try to address this question: \textit{In order to achieve an average growth rate $\alpha>0$ $(E[X_{t+1}] = \alpha E[X_{t}]$), what are the required uniform structural growth rate $r$?}  

Our analysis in this paper indicates that there are multiple branches of uniform structural growth rates, which represent alternative states corresponding to higher and lower growth rates, respectively. We establish the mathematical relationship between the uniform structural growth rate \( r \) and the parameters of the normal distribution. A sufficient condition is presented to guarantee the existence of two uniform structural growth rates. We demonstrate that there are unfeasible intervals of $E[X_{t}]$ with which the population does not have a uniform structural growth rate to achieve $(E[X_{t+1}] = \alpha E[X_{t}]$. This duality provides deeper insights into population growth under stochastic conditions. We further investigate the impact of the parameters of the normal distribution and describe biological interpretations. The influence of the parameters of the normal distribution and perturbation \( \epsilon_t \) on \( r \) provides theoretical insights into how the expectation and variance of the underlying population impact uniform structural growth rates. 

The work extends the analysis in \cite{Wang2025} where the gamma distribution is used to address the growth rate of the stochastic logistic and Ricker logistic equations at equilibrium. In this paper, the population is assumed to be in a state following a normal distribution. The study offers deeper insights into the interplay among stochastic effects, uniform structural growth rates, nonlinear transformation and distributional parameters, providing a more comprehensive framework for understanding real-world population dynamics.

\section{Mathematical formulation}

Let's state our major result on the relation of $r$ with the parameters of the normal distribution as a theorem. 
\begin{theorem}
For stochastic logistic equation \eqref{eq:logistic3}, assume that $X_t$ follows a normal distribution $N(\mu, \sigma^2)$,  and $E[\epsilon_t] = 1, E[\epsilon_t^2] = v$ with
parameters \( v > 0 \), $\sigma^2>0$, \( 0 < \mu < 1 \). In addition, we assume that 
\begin{align}
E[X_{t+1}] &= \alpha E[X_{t}], \\
\operatorname{Var}[X_{t+1}] &= \beta \operatorname{Var}[X_{t}],
\label{assmp1}
\end{align}
where \(\beta > 0 \), \(\alpha > 0 \). Then $r$ in \eqref{eq:logistic3} are positive solutions of the following polynomial in terms of $r$: 
\begin{align}
\begin{aligned}
&v (-2 \mu^4 + 4 \mu^3 - 3 \mu^2 + \mu) r^3 - v \alpha \mu r^2 \\
&\quad + \left[ v (3 \alpha^2 \mu^2) + \beta \mu (\mu - 1) - \alpha^2 \mu^2 \right] r + \beta \alpha \mu = 0
\end{aligned} 
\label{r-relation}
\end{align}
\label{thm1}
\end{theorem}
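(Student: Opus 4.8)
The plan is to convert the two distributional assumptions in \eqref{assmp1} into two scalar equations in the unknowns $r$ and $\sigma^2$, and then eliminate $\sigma^2$ to isolate a single polynomial in $r$. Write $Y = X_t(1-X_t)$. Since $\epsilon_t$ is independent of $X_t$ with $E[\epsilon_t]=1$ and $E[\epsilon_t^2]=v$, the first two moments of $X_{t+1}=rY\epsilon_t$ factor cleanly as
\begin{align*}
E[X_{t+1}] &= r\,E[Y], & E[X_{t+1}^2] &= r^2 v\,E[Y^2].
\end{align*}
First I would compute $E[Y]$ and $E[Y^2]$ from the normal moments $E[X_t]=\mu$, $E[X_t^2]=\mu^2+\sigma^2$, $E[X_t^3]=\mu^3+3\mu\sigma^2$, $E[X_t^4]=\mu^4+6\mu^2\sigma^2+3\sigma^4$.

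The cleanest route to $E[Y^2]$ is to center the variable: set $Z=X_t-\mu\sim N(0,\sigma^2)$, so that $Y = A + (1-2\mu)Z - Z^2$ with $A:=\mu(1-\mu)$. Because the odd central moments of $Z$ vanish and $E[Z^2]=\sigma^2$, $E[Z^4]=3\sigma^4$, this yields $E[Y]=A-\sigma^2$ together with $\operatorname{Var}[Y]=(1-2\mu)^2\sigma^2+2\sigma^4$, hence
\begin{align*}
E[Y^2] = A^2 + (1-6\mu+6\mu^2)\sigma^2 + 3\sigma^4.
\end{align*}
The first assumption then reads $r(A-\sigma^2)=\alpha\mu$. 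For the second, I would write $\operatorname{Var}[X_{t+1}]=r^2 v\,E[Y^2]-\bigl(r(A-\sigma^2)\bigr)^2$ and use the mean equation to replace $\bigl(r(A-\sigma^2)\bigr)^2$ by $\alpha^2\mu^2$, so that it becomes $r^2 v\,E[Y^2]-\alpha^2\mu^2=\beta\sigma^2$.

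The elimination step is the heart of the argument: solve the mean equation for $\sigma^2 = A-\alpha\mu/r$ and substitute it into the variance equation. Substituting into the $\sigma^2$ and $\sigma^4$ terms of $E[Y^2]$ introduces powers of $1/r$; multiplying the whole equation through by $r$ clears them and produces a cubic in $r$. Collecting coefficients and using $A=\mu(1-\mu)$ should then reproduce \eqref{r-relation}.

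I expect the main obstacle to be algebraic bookkeeping rather than any conceptual difficulty: after substitution the $r^2$-coefficient appears as $vA\bigl[4A+(1-6\mu+6\mu^2)\bigr]$ and the $r$-coefficient carries the factor $v\alpha\mu\bigl[(1-6\mu+6\mu^2)+6A\bigr]$, and the polynomial only collapses to the stated form once one verifies the two identities $4A+(1-6\mu+6\mu^2)=1-2\mu+2\mu^2$ and $(1-6\mu+6\mu^2)+6A=1$. Checking that $A(1-2\mu+2\mu^2)=-2\mu^4+4\mu^3-3\mu^2+\mu$, and then recombining the constant term $3v\alpha^2\mu^2-\alpha^2\mu^2$ with the $\beta$-contributions $-\beta A\,r+\beta\alpha\mu$, yields exactly \eqref{r-relation}. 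The positivity restriction on $r$ is simply the statement that we retain the positive roots of this cubic.
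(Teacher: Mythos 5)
Your proposal is correct and follows essentially the same route as the paper: turn the mean and variance assumptions into two scalar equations in $r$ and $\sigma^2$ via the normal moments, substitute $\sigma^2=\mu(1-\mu)-\alpha\mu/r$ into the second-moment equation, and clear denominators to obtain the cubic \eqref{r-relation}. Your only deviation is computing $E[Y]$ and $E[Y^2]$ through the centered variable $Z=X_t-\mu$, which is a tidier bookkeeping device but arrives at the identical intermediate expression $\mu^2(1-\mu)^2+(1-6\mu+6\mu^2)\sigma^2+3\sigma^4$ that the paper obtains from the raw moments.
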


\begin{proof}
\textbf{Moments of $X$}: We have the following relations from Appendix \ref{appendix}
\begin{align}
\begin{aligned}
E[X_t] &= \mu, & E[X_t^2] &= \mu^2 + \sigma^2, \\
E[X_t^3] &= \mu^3 + 3\mu \sigma^2, & E[X_t^4] &= \mu^4 + 6\mu^2 \sigma^2 + 3\sigma^4,
\end{aligned}
\label{r1}
\end{align}

\textbf{Expectation condition}: We begin with the expectation condition. Because of $E[\epsilon_t] = 1,$ we have 
\begin{align}
\begin{aligned}
E[X_{t+1}] &= r E[X_t (1 - X_t)], \\
&= r \left( \mu - (\mu^2 + \sigma^2) \right), \\
\end{aligned}
\end{align}
With \eqref{assmp1}, it follows that

\begin{align}
\begin{aligned}
&r \left( \mu - \mu^2 - \sigma^2 \right) = \alpha \mu, \\
& \sigma^2 = \mu - \mu^2 - \frac{\alpha \mu}{r}.
\label{exp1}
\end{aligned}
\end{align}

\textbf{Variance condition}: We continue with variance condition in \eqref{assmp1}
\begin{align}
\operatorname{Var}[X_{t+1}] &= E[X_{t+1}^2] - (E[X_{t+1}])^2=\beta \sigma^2
\label{eq:19}
\end{align}
and 
\begin{align}
E[X_{t+1}^2] &= \beta \sigma^2 + (\alpha \mu)^2, \label{eq:17}
\end{align}
On the other hand, from \eqref{eq:logistic3}, we have
\begin{align}
\begin{aligned}
E[X_{t+1}^2] &= r^2 E[\epsilon_t^2] E\left[ X_t^2 (1 - X_t)^2 \right], \\
&= r^2 v E\left[ X_t^2 - 2 X_t^3 + X_t^4 \right]
\end{aligned}
\end{align}
In view of \eqref{r1}
\begin{align}
\begin{aligned}
E[X_{t+1}^2] &= r^2 v \left[ (\mu^2 + \sigma^2) - 2 (\mu^3 + 3\mu \sigma^2) \right. \\
&\quad \left. + (\mu^4 + 6\mu^2 \sigma^2 + 3\sigma^4) \right], \\
&= r^2 v \left[ \mu^4 - 2\mu^3 + \mu^2 + (6\mu^2 \sigma^2 - 6\mu \sigma^2 + \sigma^2)+ 3\sigma^4 \right].
\label{eq:17b}
\end{aligned}
\end{align}
Thus we arrive at this relation with all the parameters. 

\begin{align}
r^2 v \left[ \mu^4 - 2\mu^3 + \mu^2 + (6\mu^2 \sigma^2 - 6\mu \sigma^2 + \sigma^2) + 3\sigma^4 \right] &= \beta \sigma^2 + (\alpha \mu)^2, 
\label{eq:18}
\end{align}
We will eliminate $\sigma$ using \eqref{exp1}. Note that 
\begin{align}
\begin{aligned}
\sigma^4 &= \left( \mu - \mu^2 - \frac{\alpha \mu}{r} \right)^2, \\
&= \left( \mu - \mu^2 \right)^2 - 2 \left( \mu - \mu^2 \right) \frac{\alpha \mu}{r} + \frac{\alpha^2 \mu^2}{r^2}, \\
&= \mu^2 - 2\mu^3 + \mu^4 - \frac{2\alpha \mu^2}{r} + \frac{2\alpha \mu^3}{r} + \frac{\alpha^2 \mu^2}{r^2}, \\
\end{aligned}
\end{align}
Now substitute $\sigma^2$ and $\sigma^4 $ into the left side of \eqref{eq:18}:   
\begin{align}
\begin{aligned}
&v r^2 \left[ \mu^4 - 2\mu^3 + \mu^2 + (6\mu^2 - 6\mu + 1) \sigma^2 + 3\sigma^4 \right], \\
&=v r^2 \left[ \mu^4 - 2\mu^3 + \mu^2 \right. \\
&\quad + (6\mu^2 - 6\mu + 1) \left( \mu - \mu^2 - \frac{\alpha \mu}{r} \right) \\
&\quad \left. + 3 \left( \mu^4 - 2\mu^3 + \mu^2 - \frac{2\alpha \mu^2}{r} + \frac{2\alpha \mu^3}{r} + \frac{\alpha^2 \mu^2}{r^2} \right) \right], \\
\label{mil12}
\end{aligned}
\end{align}
Expanding the terms in the left side of \eqref{mil12}, the left side of \eqref{eq:18} becomes
\begin{align}
\begin{aligned}
& \frac{1}{r} v r^3 \left[ \mu^4 - 2\mu^3 + \mu^2 \right. \\
&\quad + \left( 6\mu^3 - 6\mu^4 - \frac{6\alpha \mu^3}{r} - 6\mu^2 + 6\mu^3 + \frac{6\alpha \mu^2}{r} + \mu - \mu^2 - \frac{\alpha \mu}{r} \right) \\
&\quad + \left( 3\mu^4 - 6\mu^3 + 3\mu^2 - \frac{6\alpha \mu^2}{r} + \frac{6\alpha \mu^3}{r} + \frac{3\alpha^2 \mu^2}{r^2} \right) \bigg], \\
&\quad =\frac{1}{r} v r^3 \left[ -2\mu^4 + 4\mu^3 - 3\mu^2 + \mu  - \alpha \mu \frac{1}{r} + 3\alpha^2 \mu^2 \frac{1}{r^2} \right],\\
&\quad =\frac{1}{r} v  \left[ -2\mu^4 r^3 + 4\mu^3 r^3 - 3\mu^2 r^3 + \mu r^3 - \alpha \mu r^2 + 3\alpha^2 \mu^2 r \right].
\label{left1}
\end{aligned}
\end{align}
On the other hand, the right side of \eqref{eq:18} is

\begin{align}
\begin{aligned}
& \beta \sigma^2 + (\alpha \mu)^2, \\
&=  \beta \left( \mu - \mu^2 - \frac{\alpha \mu}{r} \right) + \alpha^2 \mu^2 , \\
&=  \beta \mu - \beta \mu^2 - \frac{\beta \alpha \mu}{r} + \alpha^2 \mu^2, \\
&= \frac{1}{r} \left[\beta \mu r - \beta \mu^2 r - \beta \alpha \mu + \alpha^2 \mu^2 r \right],\\
\label{rigt1}
\end{aligned}
\end{align}
Combining \eqref{eq:18},  \eqref{left1} and \eqref{rigt1} we arrive at the following simplified equation without $\sigma$ after cancelling $\frac{1}{r}$ at the both sides.

\begin{align}
\begin{aligned}
v &\left[ -2\mu^4 r^3 + 4\mu^3 r^3 - 3\mu^2 r^3 + \mu r^3 - \alpha \mu r^2 + 3\alpha^2 \mu^2 r \right] \\
&= \beta \mu r - \beta \mu^2 r - \beta \alpha \mu + \alpha^2 \mu^2 r,
\end{aligned}
\end{align}
Combining the like terms for $r^i, i=3,2,1,0$, we conclude the proof of Theorem. 
\begin{align}
\begin{aligned}
&v (-2 \mu^4 + 4 \mu^3 - 3 \mu^2 + \mu) r^3 - v \alpha \mu r^2 \\
&\quad + \left[ v (3 \alpha^2 \mu^2) + \beta \mu (\mu - 1) - \alpha^2 \mu^2 \right] r + \beta \alpha \mu = 0
\end{aligned}
\end{align}
\end{proof}

\section{Conditions for two positive solutions}\label{mathpro}
Consider the cubic polynomial from the stochastic logistic model:
\begin{align}
p(r) = a r^3 + b r^2 + c r + d = 0,
\label{poly1}
\end{align}
where:
\begin{align}
a &= v (-2 \mu^4 + 4 \mu^3 - 3 \mu^2 + \mu), \\
b &= -v \alpha \mu, \\
c &= v (3 \alpha^2 \mu^2) + \beta \mu (\mu - 1) - \alpha^2 \mu^2, \\
d &= \beta \alpha \mu,
\end{align}
with parameters \( v > 0 \), \(\beta > 0 \), \(\alpha > 0 \), \( 0 < \mu < 1 \), and \( r \) as the growth rate. We seek conditions ensuring exactly two positive real roots and one negative root.

\begin{theorem}
Assume that parameters \( v > 0 \), \(\beta > 0 \), \(\alpha > 0 \), \( 0 < \mu < 1 \). Then \eqref{poly1} has exactly two positive solutions if 
\begin{align}
\Delta >0 \quad \text{and} \quad p\left( \frac{2 v \alpha \mu + \sqrt{\Delta}}{6 a} \right) < 0.
\end{align}
where $\Delta = 4 v^2 \alpha^2 \mu^2 - 12 a c$.
\label{thm2}
\end{theorem}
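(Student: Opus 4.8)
The plan is to read off the geometry of the cubic $p$ from the signs of its coefficients and then locate three sign changes by the intermediate value theorem. First I would fix the orientation of the curve by showing $a>0$. Factoring the leading coefficient gives $a = v\mu(1-\mu)(2\mu^2-2\mu+1)$, and since $2\mu^2-2\mu+1 = 2(\mu-\tfrac12)^2+\tfrac12>0$, every factor is positive for $0<\mu<1$, so $a>0$. Consequently $p(r)\to-\infty$ as $r\to-\infty$ and $p(r)\to+\infty$ as $r\to+\infty$. I would also record the boundary value $p(0)=d=\beta\alpha\mu>0$.

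Next I would analyze the critical points. The key observation is that $\Delta = 4v^2\alpha^2\mu^2 - 12ac = (2b)^2 - 4(3a)c$ is exactly the discriminant of the derivative $p'(r)=3ar^2+2br+c$. Hence the hypothesis $\Delta>0$ forces $p'$ to have two distinct real zeros $r_1<r_2$, namely $(2v\alpha\mu\pm\sqrt{\Delta})/(6a)$. Because $a>0$, the parabola $p'$ opens upward, so $p$ increases on $(-\infty,r_1)$, decreases on $(r_1,r_2)$, and increases on $(r_2,\infty)$; thus $r_2=(2v\alpha\mu+\sqrt{\Delta})/(6a)$ is the local minimum. The essential point is that $r_2>0$, which is immediate since its numerator and denominator are both positive; this is precisely why the hypothesis evaluates $p$ at $r_2$.

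With $r_2>0$ and $p(0)>0$ in hand, I would finish purely by the intermediate value theorem, avoiding any case split on the sign of $c$. The hypothesis supplies $p(r_2)<0$. Then on $(-\infty,0)$ the function runs from $-\infty$ up to $p(0)>0$, yielding a root; on $(0,r_2)$ it runs from $p(0)>0$ down to $p(r_2)<0$, yielding a second root; and on $(r_2,\infty)$ it runs from $p(r_2)<0$ up to $+\infty$, yielding a third. These three roots lie in disjoint intervals, hence are distinct, and a cubic has at most three real roots; therefore $p$ has exactly these three, two positive and one negative, which is the claim.

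The main obstacle is not the final counting, which is an immediate IVT argument, but securing the two structural facts that make it work: that $a>0$ (fixing the up--down--up shape and the directions of the infinite branches) and that the evaluation point $r_2$ is genuinely positive. I would emphasize that testing $p$ at the local minimum $r_2$ is the sharp choice: since $p(0)>0$ together with the monotonicity pattern shows $r_2$ is the global minimizer of $p$ on $[0,\infty)$, the condition $p(r_2)<0$ is not merely sufficient but is exactly the condition under which $p$ dips below zero on the positive axis. Notably, the value $p(r_1)$ at the local maximum never enters the argument.
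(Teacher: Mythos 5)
Your proof is correct and follows the same overall strategy as the paper's: establish $a>0$, record $p(0)=d>0$ and the limits at $\pm\infty$, use $\Delta>0$ to obtain two critical points of $p'$, identify the larger one, your $r_2=(2v\alpha\mu+\sqrt{\Delta})/(6a)$ (the paper's $r_+$), as a positive local minimum, and conclude from $p(r_2)<0$. Two of your choices improve on the paper's presentation. First, the factorization $a=v\mu(1-\mu)(2\mu^2-2\mu+1)$ with $2\mu^2-2\mu+1=2(\mu-\tfrac12)^2+\tfrac12>0$ gives $a>0$ in one line, replacing the paper's monotonicity argument for $g(\mu)=-2\mu^3+4\mu^2-3\mu+1$ (which requires checking the discriminant of $g'$ and a sample value). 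Second, your three-interval intermediate value argument on $(-\infty,0)$, $(0,r_2)$, $(r_2,\infty)$ sidesteps the paper's discussion of the inflection point and, in particular, its assertion that $p(r_-)>0$ at the local maximum --- a claim the paper states without justification and which, as you correctly observe, is never needed: the sign pattern $p(0)>0$, $p(r_2)<0$, $p(r)\to+\infty$ already forces two positive roots, and $p(r)\to-\infty$ supplies the negative one, so a cubic can have no further roots. One small caution on your closing remark: $r_2$ is the global minimizer of $p$ on $[0,\infty)$ only when $p(r_2)\le p(0)$, since the minimum over $[0,\infty)$ is $\min(p(0),p(r_2))$; however, because $p(0)>0$, your sharpness claim --- that $p$ dips below zero on the positive axis exactly when $p(r_2)<0$ --- still stands.
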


\begin{proof}
\textbf{Claim \( a>0 \)}: Let  $g(\mu)=(-2 \mu^3 + 4 \mu^2 - 3 \mu + 1)$ and 
\begin{align}
a = v \mu (-2 \mu^3 + 4 \mu^2 - 3 \mu + 1) = v \mu g(\mu).
\end{align}
Note that $g(0)=1,g(1)=0$ and $g'(\mu)= -6 \mu^2 + 8 \mu - 3 $. The discriminant of $g'(\mu)$  is $64 - 72 = -8 < 0 $ and has no real roots.  Note that \( g'(0.5) = -1.5 + 4 - 3 = -0.5 < 0 \), so \( g'(\mu) < 0 \). Thus, \( g(\mu) \) is strictly decreasing. Thus, \( g(\mu) \) is strictly decreasing. Since \( g(\mu) \) is continuous, strictly decreasing, starts at 1, and ends at 0, \( g(\mu) > 0 \) for \( \mu \in (0, 1) \). It follows that $a>0$ for $\mu \in (0,1)$

\textbf{Behavior of $p(r)$}: With \( a > 0 \), \( b < 0 \), \( d > 0 \), it is easy to see that
\begin{itemize}
    \item \( r \to -\infty \): \( p(r) \to -\infty \),
    \item \( r = 0 \): \( p(0) = d > 0 \),
    \item \( r \to +\infty \): \( p(r) \to +\infty \).
\end{itemize}
As a result, there is one root \( r_1 < 0 \), two possible roots \( r_2, r_3 > 0 \),

\textbf{Critical Points of $p(r)$:} It is easy to see that $p'(r) = 3 a r^2 + 2 b r + c$ and its discriminant:
\begin{align}
\Delta = (2 b)^2 - 4 (3 a) c = 4 v^2 \alpha^2 \mu^2 - 12 a [v (3 \alpha^2 \mu^2) + \beta \mu (\mu - 1) - \alpha^2 \mu^2].
\end{align}
The critical points of $p(r)$ is 
\begin{align}
r_{\pm} = \frac{-2 b \pm \sqrt{\Delta}}{6 a} = \frac{2 v \alpha \mu \pm \sqrt{\Delta}}{6 a}.
\end{align}
In view of the assumption of Theorem \ref{thm2}, there are two critical points of $p'(r)$ and \( r_- < r_+ \) and \( r_+ > 0 \).  

\textbf{Second Derivative of $p(r)$:}  $p''(r) = 6 a r + 2 b$  and the inflection point of $p(r)$ is $r_i = \frac{v \alpha \mu}{3 a} > 0$. Therefore, 
\begin{itemize}
    \item \( r_- < r_i \): \( p''(r_-) < 0 \) (concave down),
    \item \( r_+ > r_i \): \( p''(r_+) > 0 \) (concave up).
    \item Local maximum at \( r_- \), \( p(r_-) > 0 \),
    \item Local minimum at \( r_+ > 0 \), \( p(r_+) < 0 \) (by the assmption).
\end{itemize}
Because of $p(0)>0$, there are two positive roots $r_2$ and $r_3$  of $p(r)$ such that  $r_1< 0<r_2< r_3$.   We can see that these points follow this order:  \( r_1<r_{-}<r_i< r_2 < r_+ <r_3\). This completes the proof.    
\end{proof}

 \begin{figure}[h!]
    \centering
    \begin{subfigure}[b]{0.80\textwidth}
        \centering
        \includegraphics[width=\textwidth]{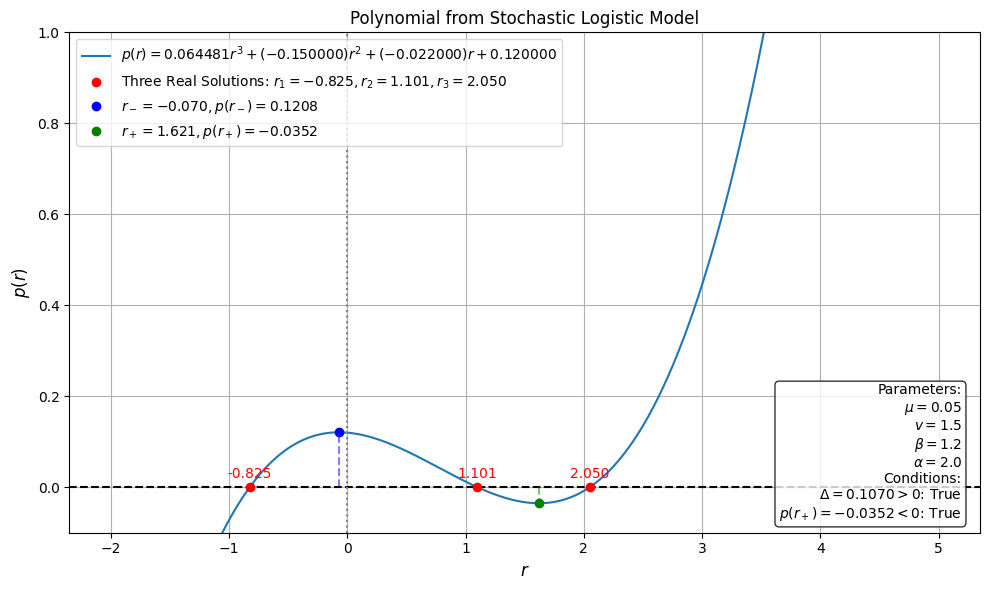}
        \label{fig:sub1}
    \end{subfigure}
    \caption{Plots of $p(r)$ with two positive real solutions }
    \label{fig:main10}
\end{figure}

 \begin{figure}[h!]
    \centering
    \begin{subfigure}[b]{0.80\textwidth}
        \centering
        \includegraphics[width=\textwidth]{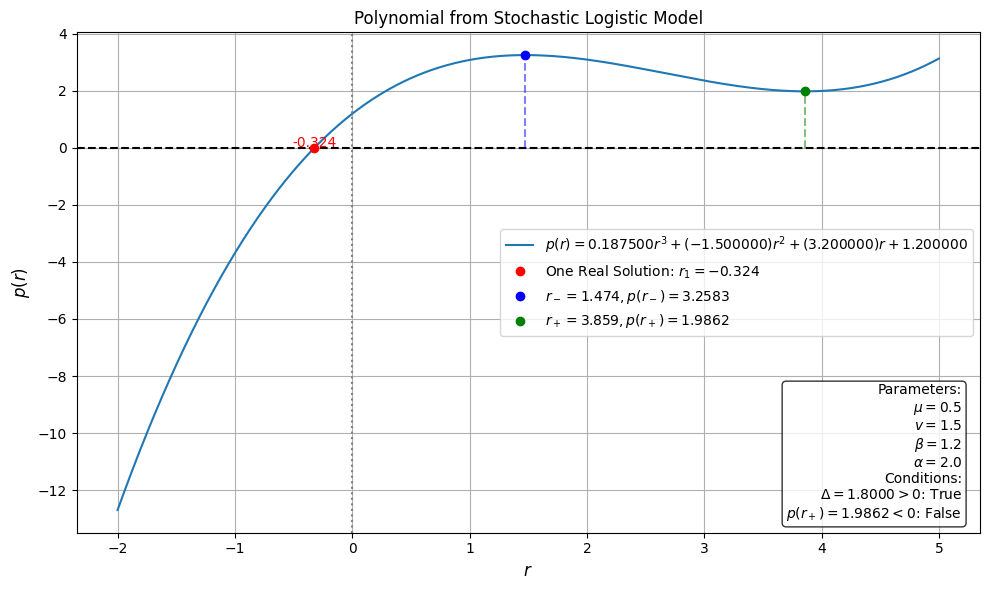}
        \label{fig:sub20}
    \end{subfigure}    
    \caption{Plots of $p(r)$ with no positive real solution}
    \label{fig:main11}
\end{figure}
Figure \ref{fig:main10} depicts the graph of $p(r)$ for the corresponding parameters ( $\mu = 0.05, v = 1.5, \beta = 1.2, \alpha = 2.0$), critical points $r_-, r_+$, along with relevant information for its roots in the proof. As we can see, the two conditions hold. It has one negative real solution ($r_1$) and two positive real solutions ($r_2, r_3$). On the other hand, in Figure \ref{fig:main11}, the parameters are $\mu = 0.5, v = 1.5, \beta = 1.2, \alpha = 2.0$, the first condition holds, but the second condition does not hold. It have only one negative root $r_1$. 

\section{Biological Interpretation}

In this section, we plot the graph of $r$ in terms of $\mu$ to understand the relation of $r$ with other parameters. We would like to see the impact of the parameters $\alpha, \beta, v$.  Figures \ref{fig:main100}, \ref{fig:main101}, \ref{fig:main102} illustrate that there are two solutions for \( r \) in \eqref{r-relation}, representing alternative uniform structural growth rate \( r \). The upper branch typically corresponds to higher growth rates, indicating a more resilient and robust population state. This branch is associated with higher population densities and is likely to represent a stable state for species that thrive in resource-rich environments. In contrast, the lower branch reflects lower growth rates and corresponds to populations that are more vulnerable to stochastic fluctuations. These lower-density populations are generally more sensitive to environmental variability and may face a heightened risk of extinction, particularly under conditions of high environmental variance.

There is a subinterval of $\mu$ in $(0,1)$ with no unform structural growth rate $r$, which represents the unfeasible range of \( \mu \). This is consistent with Theorem \ref{thm2} which has the two conditions to guarantee the existence of two positive solutions. The unfeasible range of $\mu$ is constrained by the nonlinear term of the logistic equation \eqref{eq:logistic3} and its peak occurs in the middle of the interval $(0,1)$, which makes harder to achieve $E[X_{t+1}] = \alpha E[X_{t}]$.

The figures show that the right branch generally lies above the left, indicating that the uniform structural growth rate $r$ tends to be larger for feasible values of $\mu$ near $1$. This behavior is partly due to the nonlinear transformation $X(1-X)$. Although the graph $X(1-X)$ is symmetric on $(0,1)$, a higher uniform structural growth rate is required to achieve  $E[X_{t+1}] = \alpha E[X_{t}]$ when $E[X_{t}]$ is close to $1$. 

From a population perspective, the fact that the right branch generally lies above the left implies that higher population densities (i.e., values of $\mu$ closer to 1) are associated with larger uniform structural growth rates. This suggests that populations in high-density states require greater growth potential to maintain or increase their size under stochastic influences. Biologically, this reflects the diminishing marginal returns of the nonlinear term $X(1-X)$ as populations approach carrying capacity. Even though the resource limitations intensify near high density, the system compensates with a higher growth rate to sustain the desired average population level. Conversely, populations at lower densities (left branch) can maintain growth with smaller values of $r$, reflecting fewer intraspecific constraints and greater sensitivity to environmental changes.

\subsection{Impact of $\alpha$}

Figures \ref{fig:main100} illustrates the impact of $\alpha$. Here we compare the two cases with the parameters $\alpha = 1.1,  E[\epsilon_t^2], = 1.2, \beta = 1.1$ and  $\alpha = 1.4,  E[\epsilon_t^2], = 1.2, \beta = 1.1$. As $\alpha $ becomes larger, $r$ becomes larger to achieve $E[X_{t+1}] = \alpha E[X_{t}]$. 

At the population level, an increase in $\alpha$ reflects a greater expected growth multiplier from one generation to the next. To achieve this higher expected growth under stochastic dynamics, the population must compensate with a higher uniform structural growth rate $r$. This indicates that, in more rapidly expanding populations, stronger intrinsic reproductive potential is required to offset the variability introduced by environmental noise and internal structural constraints. In ecological terms, it suggests that populations aiming for faster growth must possess higher resilience or reproductive capacity to sustain that trajectory in uncertain environments.

 \begin{figure}[h!]
    \centering
    \begin{subfigure}[b]{0.45\textwidth}
        \centering
        \includegraphics[width=\textwidth]{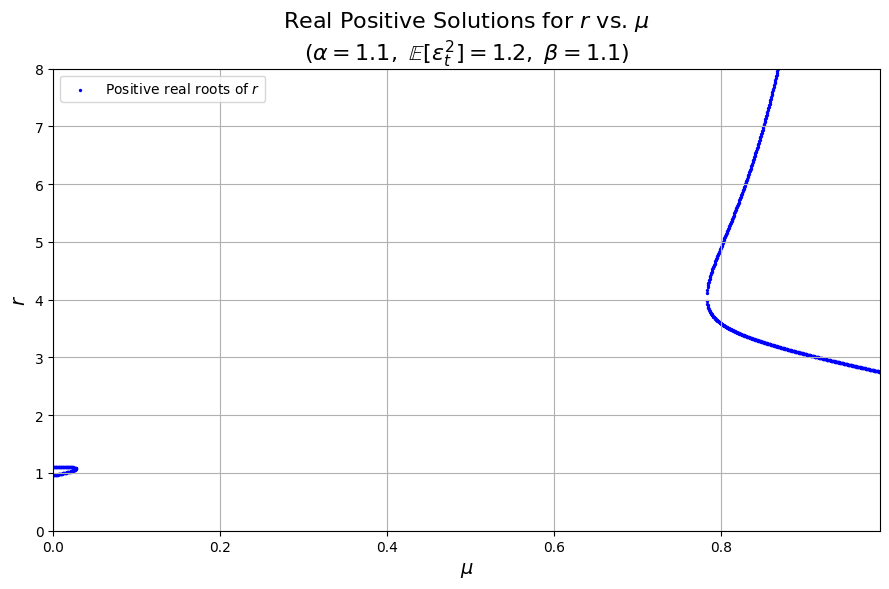}
        \label{fig:sub1}
    \end{subfigure}
    \hfill
    \begin{subfigure}[b]{0.45\textwidth}
        \centering
        \includegraphics[width=\textwidth]{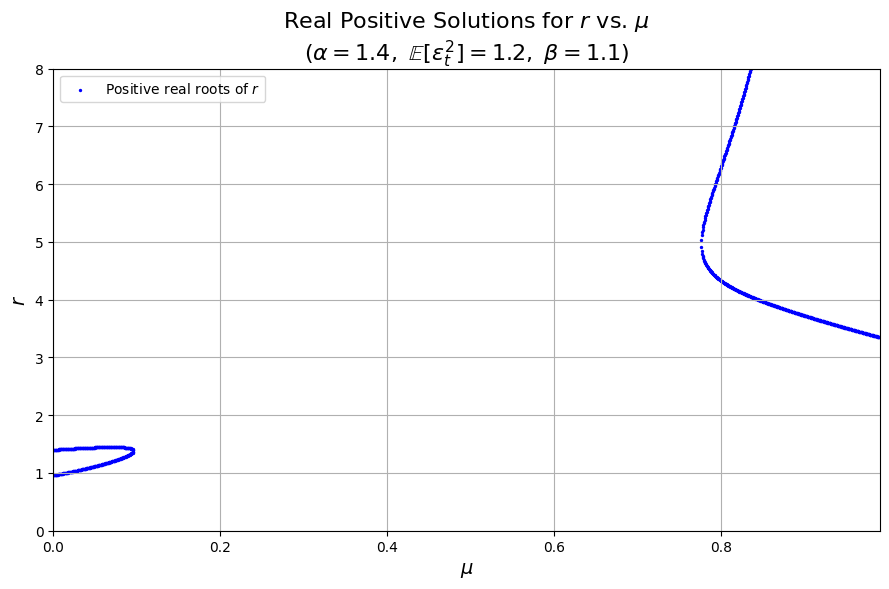}
        \label{fig:sub2}
    \end{subfigure}
    \caption{Plots of $r$ in terms of $\mu$. Impact of $\alpha$}
    \label{fig:main100}
\end{figure}

\subsection{Impact of $\beta$}

Figure \ref{fig:main101} illustrates the impact of $\beta$ when other parameters are fixed. Here we compare the two cases with the parameters $\alpha = 0.5,  E[\epsilon_t^2], = 1.2, \beta = 1.1$ and  $\alpha = 0.5,  E[\epsilon_t^2], = 1.2, \beta = 1.9$.  As $\beta$ increases, the uniform structural growth rate $r$ needs to be larger to achieve $E[X_{t+1}] = \alpha E[X_{t}]$ and $\operatorname{Var}[X_{t+1}] = \beta \operatorname{Var}[X_{t}]$ as $\beta$ becomes larger. This is more visible for the left branch where $\mu$ is smaller.  

From a population dynamics perspective, the parameter \( \beta \) reflects how the variance of population size evolves over time. A higher \( \beta \) indicates increasing variability from one generation to the next, which may arise from environmental disturbances, demographic fluctuations, or other sources of ecological uncertainty. To maintain the expected growth condition \( \mathbb{E}[X_{t+1}] = \alpha \mathbb{E}[X_t] \) under greater variability, a larger uniform structural growth rate \( r \) is required. This effect is particularly pronounced on the left branch of the solution curve, where the mean population size \( \mu \) is relatively small. In these low-density populations, higher variance poses a greater risk of decline or extinction, so stronger reproductive potential (i.e., a higher value of \( r \)) is needed to compensate. This highlights the vulnerability of small populations to stochastic effects and the importance of growth resilience in uncertain environments.

 \begin{figure}[h!]
    \centering
    \begin{subfigure}[b]{0.45\textwidth}
        \centering
        \includegraphics[width=\textwidth]{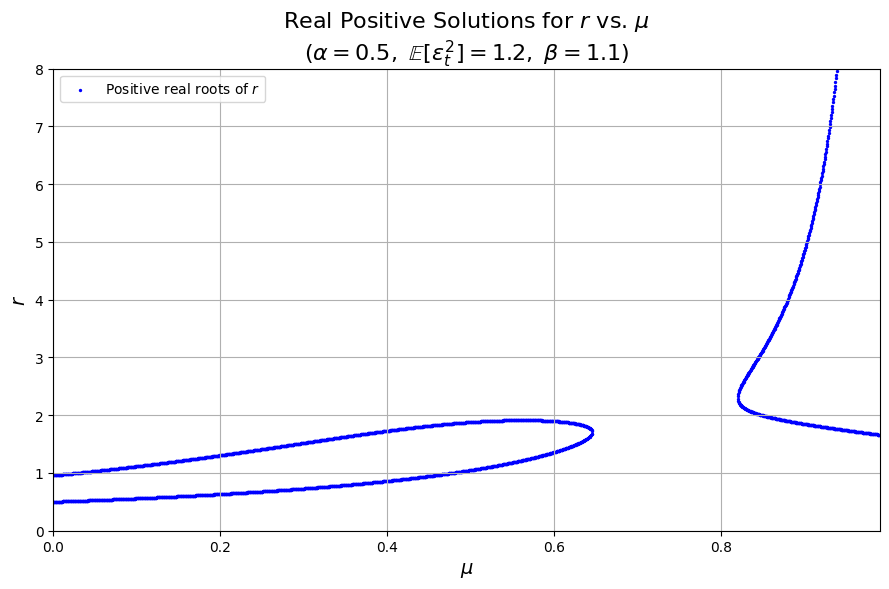}
        \label{fig:sub1}
    \end{subfigure}
    \hfill
    \begin{subfigure}[b]{0.45\textwidth}
        \centering
        \includegraphics[width=\textwidth]{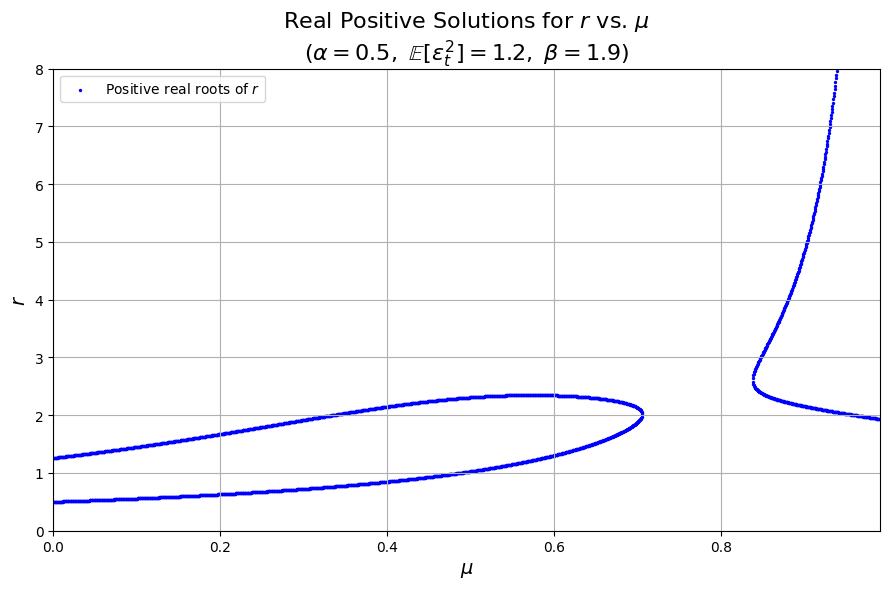}
        \label{fig:sub2}
    \end{subfigure}
    \caption{Plots of $r$ in terms of $\mu$. Impact of $\beta$}
    \label{fig:main101}
\end{figure}

\subsection{Impact of $var[\epsilon_t]$}

Figure \ref{fig:main102} illustrates the impact of the variance \( \text{Var}[\epsilon_t] \) of the stochastic perturbation \( \epsilon_t \) in the logistic model \eqref{eq:logistic3}. We compare two cases: \( \alpha = 0.5, \mathbb{E}[\epsilon_t^2] = 1.2, \beta = 1 \) and \( \alpha = 0.5, \mathbb{E}[\epsilon_t^2] = 2.2, \beta = 1 \), where \( \mathbb{E}[\epsilon_t^2] = \text{Var}[\epsilon_t] + 1 \). As shown in the figure, increasing \( \text{Var}[\epsilon_t] \) stretches the unfeasible range of \( \mu \), meaning that a broader range of population means cannot be sustained by any uniform structural growth rate \( r \). 

In terms of population-level behavior, higher environmental variance reduces the parameter space where stable stochastic growth is possible. Additionally, the structural growth rate \( r \) becomes slightly larger on the lower branch and smaller on the upper branch with increasing \( \text{Var}[\epsilon_t] \). This suggests that in low-growth or vulnerable population states (lower branch), increased environmental variability demands a higher intrinsic growth potential to maintain the expected dynamics. In contrast, in high-growth or resilient states (upper branch), the increased stochasticity allows the system to achieve the same expected growth with a slightly reduced value of \( r \). This subtle asymmetry highlights how different population states respond differently to environmental noise, with important implications for stability and long-term viability.

 \begin{figure}[h!]
    \centering
    \begin{subfigure}[b]{0.45\textwidth}
        \centering
        \includegraphics[width=\textwidth]{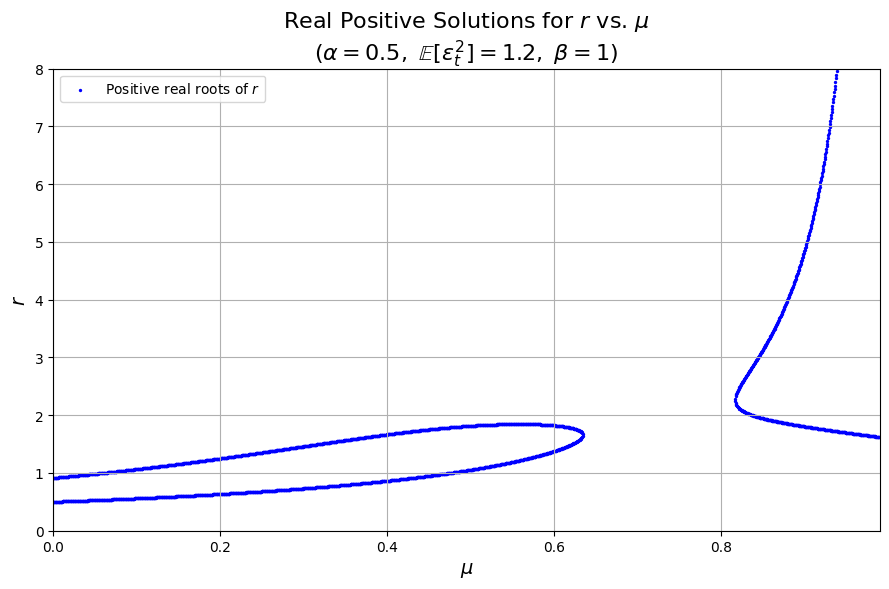}
        \label{fig:sub1}
    \end{subfigure}
    \hfill
    \begin{subfigure}[b]{0.45\textwidth}
        \centering
        \includegraphics[width=\textwidth]{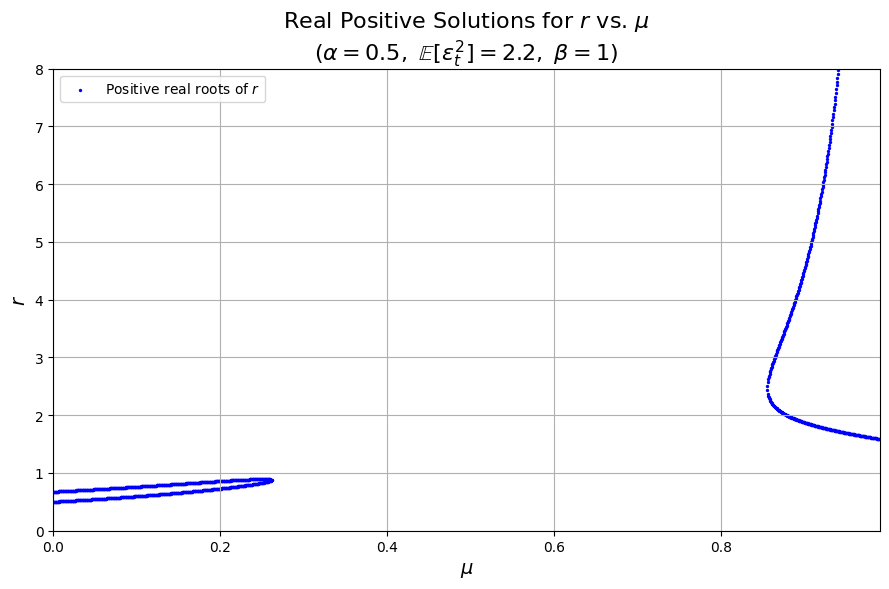}
        \label{fig:sub2}
    \end{subfigure}
    \caption{Plots of $r$ in terms of $\mu$. Impact of $E[\epsilon_t^2]=Var[\epsilon_t]+1$}
    \label{fig:main102}
\end{figure}

\section{Discussion}
In this work, we explored the use of the normal distribution to study the populations governed by discrete a stochastic logistic equation, and established the mathematical relationships between the parameters of the normal distribution and those of the stochastic logistic model. Specifically, we formulated the uniform structural growth rate $r$ as the positive solutions of a polynomial of the parameters of the normal distribution and the variance associated with small stochastic perturbations. Based on the detailed study of the polynomial, we discovered that the uniform structural growth rate $r$ can exhibit two distinct branches, corresponding to alternative population states. This duality provides deeper insights into population growth under stochastic conditions. A sufficient condition for the existence of two uniform structural growth rates was established and rigorously proved. We also identified both feasible and infeasible ranges for the mean $\mu$ within the context of the stochastic model.

We discussed the biological significance of these findings, emphasizing the influence of stochastic perturbations and the parameter of the normal distribution on the existence of the uniform structural growth rate $r$. Ecological interpretations of these relationships were provided, highlighting their implications for understanding real-world population dynamics.

Theorems \ref{thm1} and \ref{thm2} reveal that unlike deterministic discrete models, not every $r>0$ is admissible as a uniform structural growth rate in the stochastic setting. Only two distinct branches of the uniform structural growth rate exist for equation \eqref{eq:logistic3}, with their feasibility determined by the values of \( \mu \), \( \text{Var}[\epsilon_t] \) and $\beta$. Moreover, there are regions in the parameter space where no uniform structural growth rate is possible—specifically, certain ranges of $\mu$ are infeasible for the stochastic model.

Several promising directions for future research can tackle the challenges identified in this study and build upon its findings. One immediate direction involves extending the analysis to more complex discrete stochastic models that have additional biological factors such as migration and environmental variability, along with appropriately chosen probability distributions. Such extensions may necessitate the development of new mathematical results to capture the increased complexity of real-world population dynamics. These theoretical advancements will offer deeper insights into the behavior of populations under stochastic influences and help understand how the underlying distributions affect the papulation dynamics.

Finally, real-world biological population data could be incorporated to validate and improve the mathematical relationships developed in this study. Using empirical data, appropriate probability distributions can be identified, stochastic models can be fitted, and key model parameters can be estimated. These empirical results can then be compared with the theoretical findings presented in this paper. Such an approach would allow for a rigorous assessment of how well the theoretical framework captures the behavior of fluctuating populations, thereby strengthening the model’s ecological relevance and practical applicability to real-world population dynamics.

\section{Appendix: Moments formula}\label{appendix}
The computation of the moments for a normal distribution can be found in many probability books (\cite{wikipedia_normal}). Let \( X \sim \mathcal{N}(\mu, \sigma^2) \), where the PDF of \( X \) is
\[
f_X(x) = \frac{1}{\sqrt{2\pi \sigma^2}} \exp\left(-\frac{(x - \mu)^2}{2\sigma^2} \right)
\]
We compute the first four non-central moments
\[
\mathbb{E}[X^n] = \int_{-\infty}^\infty x^n f_X(x)\, dx
\]
We use the substitution 

\[
z = \frac{x - \mu}{\sigma} \quad \Rightarrow \quad x = \mu + \sigma z,\quad dx = \sigma\, dz
\]
Then

\[
\mathbb{E}[X^n] = \int_{-\infty}^{\infty} (\mu + \sigma z)^n \cdot \frac{1}{\sqrt{2\pi}} e^{-z^2/2} dz
\]
We expand using the binomial theorem

\[
(\mu + \sigma z)^n = \sum_{k=0}^n \binom{n}{k} \mu^{n-k} \sigma^k z^k
\]
Therefore

\[
\mathbb{E}[X^n] = \sum_{k=0}^n \binom{n}{k} \mu^{n-k} \sigma^k \mathbb{E}[Z^k]
\]
where \( Z \sim \mathcal{N}(0, 1) \), and 
\[
\mathbb{E}[Z^n] = \int_{-\infty}^{\infty} z^n \cdot \frac{1}{\sqrt{2\pi}} e^{-z^2/2} dz
\]
Note that
\begin{itemize}
  \item If \( n \) is odd, \( z^n \) is an odd function and the integral over symmetric limits is 0.
  \item For \( n = 2 \), this is the variance of the standard normal: \( \mathbb{E}[Z^2] = 1 \).
  \item For \( n = 4 \), use known identity: \( \mathbb{E}[Z^4] = 3 \).
\end{itemize}
Here we use the identity for even moments of the standard normal:

\begin{equation}\label{gamm}
E[Z^{2n}] = \frac{(2n)!}{2^n n!}
\end{equation}
Let's verify \eqref{gamm}. Since 
$$
\mathbb{E}[Z^{2n}] = \int_{-\infty}^{\infty} z^{2n} \frac{1}{\sqrt{2\pi}} e^{-z^2/2} \, dz
$$
and the integrand is even, this simplifies to
$$
\mathbb{E}[Z^{2n}] = \frac{2}{\sqrt{2\pi}} \int_{0}^{\infty} z^{2n} e^{-z^2/2} \, dz.
$$
Let $ t = z^2/2 $, so $ z = \sqrt{2t} $, $ dz = \frac{\sqrt{2}}{2\sqrt{t}} dt $ and we have

$$
\mathbb{E}[Z^{2n}] = \frac{2}{\sqrt{2\pi}} \int_{0}^{\infty} (2t)^n e^{-t} \frac{\sqrt{2}}{2\sqrt{t}} \, dt = \frac{2^n}{\sqrt{\pi}} \int_{0}^{\infty} t^{n - 1/2} e^{-t} \, dt.
$$
The integral is the Gamma function $ \Gamma(n + 1/2) $. Using the Gamma property

$$
\Gamma\left(n + \frac{1}{2}\right) = \frac{(2n)!}{4^n n!} \sqrt{\pi}
$$
we substitute back
$$
\mathbb{E}[Z^{2n}] = \frac{2^n}{\sqrt{\pi}} \cdot \frac{(2n)!}{4^n n!} \sqrt{\pi} = \frac{(2n)!}{2^n n!}.
$$
Therefore

$$
\mathbb{E}[Z^{2n}] = \frac{(2n)!}{2^n n!}
$$
Now we compute each moment
\begin{align*}
\mathbb{E}[X] &= \mu + \sigma \mathbb{E}[Z] = \mu, \\
\mathbb{E}[X^2] &=  \mu^2 + 2\mu \sigma \mathbb{E}[Z] + \sigma^2 \mathbb{E}[Z^2] = \mu^2 + \sigma^2, \\
\mathbb{E}[X^3] &= \mu^3 + 3\mu^2 \sigma \mathbb{E}[Z] + 3\mu \sigma^2 \mathbb{E}[Z^2] + \sigma^3 \mathbb{E}[Z^3]
= \mu^3 + 3\mu \sigma^2, \\
\mathbb{E}[X^4] &= \mu^4 + 4\mu^3 \sigma \mathbb{E}[Z] + 6\mu^2 \sigma^2 \mathbb{E}[Z^2] + 4\mu \sigma^3 \mathbb{E}[Z^3] + \sigma^4 \mathbb{E}[Z^4]
= \mu^4 + 6\mu^2 \sigma^2 + 3\sigma^4.
\end{align*}

\section{Data Availability} The manuscript has no associated data.


\begin{thebibliography}{9}

\bibitem{Marry2002}
J. D. Murray, \emph{Mathematical Biology I. An Introduction}, Springer 2002. 

\bibitem{Allen}
Allen, L. J. S. (2010). \emph{An Introduction to Stochastic Processes with Applications to Biology}. Chapman and Hall/CRC.








\bibitem{kot2001elements}
M. Kot, \emph{Elements of Mathematical Ecology}. Cambridge University Press, 2001.


\bibitem{Braverman2013} E. Braverman, A. Rodkina (2013). \emph{Difference equations of Ricker and logistic types under bounded stochastic perturbations with positive mean}, Computers and Mathematics with Applications, Volume 66, Issue 11, 2013, Pages 2281-2294.

\bibitem{Aktar2023} Md Aktar, Ul Karim, Vikram Aithal, Amiya Ranjan Bhowmick, (2023). \emph{Random variation in model parameters: A comprehensive review of stochastic logistic growth equation}, Ecological Modelling, Volume 484, 2023, 110475.




\bibitem{may1976simple} May, R. M. (1976). \emph{Simple mathematical models with very complicated dynamics.} \emph{Nature}, 261(5560), 459-467.






\bibitem{Yan2024} Dingding Yan, Mengqi He, Robert A. Cheke, Qianqian Zhang, Sanyi Tang, (2024). \emph{A stochastic hormesis Ricker model and its application to multiple fields}. Chaos, Solitons \& Fractals, Volume 185, 2024, 115131




\bibitem{Schreiber2021} Schreiber, Sebastian J. and Huang, Shuo and Jiang, Jifa and Wang, Hao (2021). \emph{Extinction and Quasi-Stationarity for Discrete-Time, Endemic SIS and SIR Models}, SIAM Journal on Applied Mathematics, 81(5) 2195-2217.

\bibitem{Wang2025}
Haiyan Wang,
\textit{Equilibrium analysis of discrete stochastic population models with gamma distribution},
Mathematical Biosciences,  381, 109398, 2025, https://doi.org/10.1016/j.mbs.2025.109398


\bibitem{Kang2010}
Yun Kang, Peter Chesson,
\textit{Relative nonlinearity and permanence}, Theoretical Population Biology, 78, 1, 2010, 26-35


\bibitem{erguler2008statistical} 
K. Erguler and M. P. H. Stumpf, 
\textit{Statistical Interpretation of the Interplay Between Noise and Chaos in the Stochastic Logistic Map}, 
Mathematical Biosciences, vol. 216, pp. 90--99, 2008.


\bibitem{wikipedia_normal}
Wikipedia contributors, ``Normal distribution," \emph{Wikipedia, The Free Encyclopedia}, 2024.

\end{thebibliography}
\end{document}